\def\ps@pprintTitle{%
 \let\@oddhead\@empty
 \let\@evenhead\@empty
 \def\@oddfoot{\centerline{\thepage}}%
 \let\@evenfoot\@oddfoot}
\author{Jonas Teuwen}%
\address{Delft Institute of Applied Mathematics,
  Delft University of Technology, P.O. Box 5031, 2600 GA Delft, The
  Netherlands}%
\newtheorem{theorem}{Theorem}
\newtheorem{lemma}{Lemma}
\theoremstyle{remark}
\newcommand{\D}{\,\textup{d}}
\newcommand{\la}{\langle}
\newcommand{\ra}{\rangle}
\newcommand{\CcR}{{C_{\text{c}}(\R^d)}}
\renewcommand{\leq}{\leqslant}
\renewcommand{\geq}{\geqslant}
\renewcommand{\epsilon}{\varepsilon}
\newcommand{\R}{\mathbf R}
\newcommand{\e}{\mathrm{e}} 
\journal{Indagationes Mathematicae}
\begin{document}

\begin{frontmatter}



\title{A note on Gaussian maximal functions}




\begin{abstract}
  This note presents a proof that the non-tangential maximal function of the
  Ornstein-Uhlenbeck semigroup is bounded pointwise by the Gaussian
  Hardy-Littlewood maximal function. In particular this entails an extension on
  a result by Pineda and Urbina \cite{Pineda2008} who proved a similar result
  for a `truncated' version with fixed parameters of the non-tangential maximal
function.
  We actually obtain boundedness of the maximal function on non-tangential 
  cones of arbitrary aperture.
\end{abstract}

\begin{keyword}
Ornstein-Uhlenbeck semigroup \sep Mehler
kernel \sep Gaussian maximal function \sep admissible cones
\end{keyword}

\end{frontmatter}


\section{Introduction}
Maximal functions are among the most studied objects in harmonic
analysis. 
It is well-known that the classical non-tangential maximal function associated
with the heat semigroup is bounded pointwise by the Hardy-Littlewood maximal
function, for every $x \in \R^d$, i.e.,
\begin{equation}\label{eq:classical}
  \sup_{\substack{(y, t) \in   \R^{d + 1}_+\\ |x - y| < t}} |\e^{t^2 \Delta}
u(y)| \lesssim \sup_{r
    > 0}  \frac1{|B_r(x)|}\int_{B_r(x)} |u| \D\lambda,
\end{equation}
for all locally integrable functions $u$ on $\R^d$ where $\lambda$ is
the Lebesgue measure on $\R^d$ (cf.\ \cite[Proposition II
2.1.]{Stein1993}).
Here the action of \emph{heat semigroup} $\e^{t \Delta} u = \rho_t \ast u$ is
given by a convolution of $u$ with the \emph{heat kernel}
\begin{equation*}
  \rho_t(\xi) := \frac{\e^{-|\xi|^2/4t}}{(4\pi t)^{\frac{d}2}}, \:\text{with}\:
  t > 0 \:\text{and}\: \xi \in \R^d.
\end{equation*}
In this note we are interested in its Gaussian counterpart. The change from
Lebesgue measure to the \textit{Gaussian measure}
\begin{equation}
  \label{eq:Gaussian-measure}
  \mathrm{d}\gamma(x) := \pi^{-\frac{d}2} \e^{-|x|^2} \D\lambda(x)
\end{equation}
introduces quite some intricate technical and conceptual difficulties which are
due to its non-doubling nature. Instead of the Laplacian, we will use its
Gaussian
analogue, the \emph{Ornstein-Uhlenbeck operator} $L$ which is given by
\begin{equation}
  \label{eq:Ornstein-Uhlenbeck-operator}
  L := \frac12 \Delta - \la x, \nabla \ra = -\frac12 \nabla^* \nabla,
\end{equation}
where $\nabla^*$ denotes the adjoint of $\nabla$ with respect to the measure
$\D\gamma$.
Our main result, to be proved in Theorem~\ref{thm:Gaussian-maximal-function},
is the following Gaussian analogue of \eqref{eq:classical}:
\begin{equation}
  \label{eq:main}
  \sup_{(y, t) \in \Gamma_x^{(A, a)}} |\e^{t^2 L} u(y)| \lesssim \sup_{r > 0}
  \frac1{\gamma(B_r(x))}\int_{B_r(x)} |u| \, \D\gamma.
\end{equation}
Here, 
\begin{equation}
  \label{eq:Gaussian-cone}
  \Gamma_x^{(A, a)} := \Gamma_x^{(A, a)}(\gamma) := \{(y, t) \in \R^{d + 1}_+
  \,: \, |x - y| < At \:\text{and}\: t \leq a m(x)\}
\end{equation}
is the \textit{Gaussian cone} with aperture $A$ and cut-off parameter $a$, and
\begin{equation}\label{eq:m-function}
  m(x) := \min\biggl\{1, \frac1{|x|} \biggr\}.
\end{equation}
As shown in \cite[Theorem 2.19]{Mattila1995} the centered Gaussian
Hardy-Littlewood maximal function is of weak-type $(1, 1)$ and is
$L^p(\gamma)$-bounded for $1 < p \leq \infty$. In fact, the same result holds
when the Gaussian measure $\gamma$ is replaced by any Radon measure $\mu$.
Furthermore, if $\mu$ is doubling, then these results even hold for the
\textit{uncentered} Hardy-Littlewood maximal function. For the Gaussian measure
$\gamma$ the uncentered weak-type $(1, 1)$ result is known to fail for $d > 1$ \cite{Sjogren1983}.
Nevertheless, the uncentered Hardy-Littlewood maximal function for $\gamma$ is
$L^p$-bounded for $1 < p \leq \infty$ \cite{Liliana2002}.

A slightly weaker version of the inequality \eqref{eq:main} has been proved by 
Pineda and Urbina \cite{Pineda2008} who showed that 
\begin{equation*}
  \sup_{(y, t) \in \widetilde{\Gamma}_x} |\e^{t^2 L} u(y)|
  \lesssim \sup_{r > 0}  \frac1{\gamma(B_r(x))}\int_{B_r(x)} |u| \D\gamma,
\end{equation*}
where
\begin{equation*}
  \widetilde{\Gamma}_x = \{(y, t) \in \R^d_+ : |x - y| < t \leq
  \widetilde{m}(x)\}
\end{equation*}
is the `reduced' Gaussian cone corresponding to the function
\begin{equation*}
  \widetilde{m}(x) = \min\biggl\{\frac12, \frac1{|x|}\biggr\}.
\end{equation*}
Our proof of \eqref{eq:main} is shorter than the one presented in
\cite{Pineda2008}. It has the further advantage of allowing
the extension to
cones with arbitrary aperture $A > 0$ and cut-off
parameter $a > 0$ without any additional technicalities. This additional
generality is important and has already been used by Portal (cf. the claim
made in \cite[discussion preceding Lemma 2.3]{Portal2014}) to prove the
$H^1$-boundedness of the Riesz transform associated with $L$.

\section{The Mehler kernel}
The \textit{Mehler kernel} (see e.g., \cite{Sjogren1997}) is the Schwartz
kernel associated to the Ornstein-Uhlenbeck semigroup $(\e^{tL})_{t \geq 0}$,
that is,
\begin{equation}
  \label{eq:Ornstein-Uhlenbeck-semigroup-integral}
  \e^{tL} u(x) = \int_{\R^d} M_t(x, \cdot) u \, \D\gamma.
\end{equation}
There is an abundance of literature on the Mehler kernel and its
properties. We shall only use the fact, proved e.g. in the survey paper
\cite{Sjogren1997}, that it is given explicitly by
\begin{equation}
  \label{eq:Mehler-kernel-Sjogren}
  M_t(x,y) = \frac{\exp\biggl(-\dfrac{|\e^{-t} x - y|^2}{1 - \e^{-2t}}
    \biggr)}{(1 - \e^{-2t})^{\frac{d}2}} \e^{|y|^2}.
\end{equation}
Note that the symmetry of the semigroup $\e^{tL}$ allows us to conclude
that $M_t(x, y)$ is symmetric in $x$ and $y$ as well. A formula for
\eqref{eq:Mehler-kernel-Sjogren} honoring this observation is:
\begin{equation}
  \label{eq:Mehler-kernel}
  M_t(x, y) = \frac{\exp\biggl(-\e^{-2t} \dfrac{|x - y|^2}{1
      - \e^{-2 t}}  \biggr)}{(1 - \e^{-t})^{\frac{d}2}}
  \frac{\exp\biggl(2\e^{-t} \dfrac{\la x, y \ra}{1 + \e^{-t}}
    \biggr)}{(1 + \e^{-t})^{\frac{d}2}}.
\end{equation}

\section{Some lemmata}
We use $m$ as defined in \eqref{eq:m-function} in our next lemma,
which is taken from \cite[Lemma 2.3]{MaasNeervenPortal2011}.
\begin{lemma}\label{lem:m-xy-equivalence}
  Let $a, A$ be strictly positive real numbers and $t > 0$. We have
  for $x, y \in \R^d$ that:
  \begin{enumerate}
  \item If $|x - y| < A t$ and $t \leq a m(x)$, then $t
    \leq a(1 + aA) m(y)$,
  \item If $|x - y| < A m(x)$, then $m(x) \leq (1 +
    A) m(y)$ and $m(y) \leq 2 (1 + A) m(x)$. 
  \end{enumerate}
\end{lemma}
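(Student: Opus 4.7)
My plan is to reduce both parts to case analysis based on the piecewise definition $m(z)=1$ if $|z|\leq 1$ and $m(z)=1/|z|$ if $|z|>1$. Throughout, the key elementary facts I would use are $m(z)\leq 1$ and $m(z)|z|\leq 1$, together with the triangle inequalities $|y|\leq |x|+|x-y|$ and $|y|\geq |x|-|x-y|$.

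For (1), I would extract from $t\leq a\,m(x)$ the two scalar estimates $t\leq a$ and $t|x|\leq a$. The case $|y|\leq 1$ is trivial since $m(y)=1$ and $t\leq a\leq a(1+aA)$. When $|y|>1$ the goal becomes $t|y|\leq a(1+aA)$, which should follow from the triangle-inequality expansion
\[
  t|y| \leq t|x|+t|x-y| < t|x|+At^{2} \leq a+a^{2}A.
\]

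For the first inequality of (2), $m(x)\leq(1+A)m(y)$, I would again dispose of the case $|y|\leq 1$ immediately and, for $|y|>1$, estimate
\[
  m(x)|y| \leq m(x)|x|+m(x)|x-y| < 1 + A\,m(x)^{2} \leq 1+A,
\]
which is the desired bound after dividing by $|y|$.

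The hard part will be the second inequality of (2), $m(y)\leq 2(1+A)m(x)$, which is the only place where the factor $2$ appears. If $|x|\leq 1$ this is immediate since $m(x)=1$ and $m(y)\leq 1$. If $|y|\leq 1$ and $|x|>1$, I would use $|x|\leq 1+A/|x|$ to deduce $|x|<1+A$, giving $m(x)>1/(1+A)$ and hence $m(y)=1\leq 2(1+A)m(x)$. The genuinely delicate subcase is $|x|,|y|>1$: the hypothesis now reads $|x|\cdot|x-y|<A$, and the reverse triangle inequality yields $|x||y|>|x|^{2}-A$. Splitting on whether $|x|^{2}\geq 2A$ or not, in the first case $|x||y|>|x|^{2}/2$ gives $|x|/|y|<2$, while in the second $|x|<\sqrt{2A}\leq 1+A$ combined with $|y|>1$ gives $|x|/|y|<1+A$. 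In either situation $m(y)/m(x)=|x|/|y|\leq 2(1+A)$, completing the proof.
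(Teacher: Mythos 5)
Your proof is correct: I checked each case and the arithmetic goes through, including the delicate subcase $|x|,|y|>1$ of the last inequality, where the split on $|x|^2\geq 2A$ versus $|x|^2<2A$ (using $\sqrt{2A}\leq 1+A$) does yield $|x|/|y|\leq\max\{2,1+A\}\leq 2(1+A)$. Note that the paper itself offers no proof of this lemma --- it simply cites \cite[Lemma 2.3]{MaasNeervenPortal2011} --- so there is nothing internal to compare against; your argument is a legitimate self-contained verification. If you want to streamline it, observe that $1/m(z)=\max\{1,|z|\}$ and that $\max\{1,u+v\}\leq\max\{1,u\}+v$ for $v\geq 0$; then part (1) becomes the one-line chain
\[
  \frac{1}{m(y)}\leq\max\{1,|x|+At\}\leq\frac{1}{m(x)}+At\leq\frac{a}{t}+At\leq\frac{a}{t}\bigl(1+aA\bigr),
\]
using $t\leq a\,m(x)$ twice, and the first half of part (2) follows the same way; this avoids the case distinction on $|y|$. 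Your direct route for $m(y)\leq 2(1+A)m(x)$ is as good as any I know for getting the factor $2$ rather than the weaker $1+A+A^2$ one would obtain by naively iterating part (1).
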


The next lemma, taken from \cite[Proposition 2.1(i)]{Mauceri2007}, will come
useful when we want to cancel exponential
growth in one variable with exponential decay in the other as long
both variables are in a Gaussian cone.
For the reader's convenience, we include a short proof.
\begin{lemma}\label{lem:Cone-Gaussians-comparable}
  Let $\alpha > 0$ and $|x - y| \leq \alpha m(x)$. Then:
  \begin{equation*}
    \e^{-\alpha^2-2\alpha} \e^{|y|^2}
    \leq \e^{|x|^2} \leq
    \e^{\alpha^2(1 + \alpha)^2+2\alpha(1 + \alpha)} \e^{|y|^2} .
  \end{equation*}
\end{lemma}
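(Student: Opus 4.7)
The plan is to reduce both inequalities to estimates of $|y|^2 - |x|^2$ and $|x|^2 - |y|^2$, starting from the reverse triangle inequality $\bigl||x|-|y|\bigr| \leq |x-y| \leq \alpha m(x)$ and then handling the two cases in the definition of $m$ separately.

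For the left inequality I would write $|y| \leq |x| + \alpha m(x)$, square, and bound the cross term by considering the cases $|x| \leq 1$ (where $m(x) = 1$ gives $2\alpha|x| \leq 2\alpha$) and $|x| > 1$ (where $m(x) = 1/|x|$ gives $2\alpha|x| m(x) = 2\alpha$). In both cases one uniformly obtains
\begin{equation*}
  |y|^2 - |x|^2 \;\leq\; 2\alpha|x| m(x) + \alpha^2 m(x)^2 \;\leq\; 2\alpha + \alpha^2,
\end{equation*}
since $|x| m(x) \leq 1$ and $m(x) \leq 1$. Exponentiating yields $\e^{|y|^2} \leq \e^{\alpha^2+2\alpha} \e^{|x|^2}$, which is exactly the left inequality.

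For the right inequality the analogous estimate $|x|^2 - |y|^2 \leq 2\alpha|y| m(x) + \alpha^2 m(x)^2$ is unhelpful because it involves $m(x)$ together with $|y|$. The trick is to convert $m(x)$ to $m(y)$ via part (2) of Lemma~\ref{lem:m-xy-equivalence}, which under the hypothesis $|x-y| < \alpha m(x)$ gives $m(x) \leq (1+\alpha) m(y)$. Substituting this yields
\begin{equation*}
  |x|^2 - |y|^2 \;\leq\; 2\alpha(1+\alpha)\, |y| m(y) + \alpha^2(1+\alpha)^2 m(y)^2 \;\leq\; 2\alpha(1+\alpha) + \alpha^2(1+\alpha)^2,
\end{equation*}
using again the inequalities $|y| m(y) \leq 1$ and $m(y) \leq 1$. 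Exponentiation delivers the right-hand inequality with the asymmetric constant $\alpha^2(1+\alpha)^2 + 2\alpha(1+\alpha)$.

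There is no real obstacle; the only subtle point is the asymmetry in the constants, which reflects the fact that the estimate $|x| \lesssim |y| + \alpha m(x)$ must first be rephrased in terms of $m(y)$ before the case analysis on $|y|$ versus $1$ can be carried out. This is where Lemma~\ref{lem:m-xy-equivalence}(2) is essential, and swapping the roles of $x$ and $y$ is not permitted because the hypothesis $|x-y| \leq \alpha m(x)$ is itself not symmetric in $x$ and $y$.
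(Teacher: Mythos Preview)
Your argument is correct and matches the paper's proof essentially line for line: both use $|y|\leq |x|+\alpha m(x)$ together with $|x|m(x)\leq 1$ and $m(x)\leq 1$ for the left inequality, and then invoke Lemma~\ref{lem:m-xy-equivalence}(2) to replace $m(x)$ by $(1+\alpha)m(y)$ for the right one. The only cosmetic difference is that the paper uses $|x|m(x)\leq 1$ directly rather than splitting into the cases $|x|\leq 1$ and $|x|>1$.
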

\begin{proof}
  By the triangle inequality and $m(x)|x| \leq 1$ we get, 
  \begin{equation*}
    |y|^2 \leq (\alpha m(x) + |x|)^2 \leq \alpha^2 + 2 \alpha + |x|^2.
  \end{equation*}
  This gives the first inequality.  For the second we use
  Lemma~\ref{lem:m-xy-equivalence} to infer $m(x) \leq (1 + \alpha)
  m(y)$. Proceeding as before we obtain
  \begin{equation*}
    |x|^2 \leq \alpha^2 (1 + \alpha)^2 + 2 \alpha (1 + \alpha) + |y|^2,
  \end{equation*}
    which finishes the proof.
\end{proof}

\subsection{An estimate on Gaussian balls}
Let $B := B_t(x)$ be the open Euclidean ball with radius $t$ and center $x$
and let $\gamma$ be the Gaussian measure as defined by
\eqref{eq:Gaussian-measure}. We shall denote by $S_d$ the surface area 
of the unit sphere in $\R^d$.

\begin{lemma}\label{lem:Gaussian-ball-shift-lemma}
  For all $x \in \R^d$ and $t > 0$ we have the inequality:
  \begin{equation}\label{eq:Gaussian-ball-shift-lemma}
    \gamma(B_t(x)) \leq \frac{S_d}{\pi^{\frac{d}2}} \frac{t^d}d \e^{2 t|x|}
\e^{-|x|^2}.
  \end{equation}
\end{lemma}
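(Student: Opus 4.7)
The plan is to bound the integrand $\e^{-|y|^2}$ pointwise from above on $B_t(x)$ by its maximum and then pull out the constant. First I would write out the definition
\begin{equation*}
  \gamma(B_t(x)) = \pi^{-\frac{d}2} \int_{B_t(x)} \e^{-|y|^2} \D\lambda(y),
\end{equation*}
so that only the integral of the Gaussian density against Lebesgue measure remains to be estimated.

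The key geometric step is to control $|y|^2$ from below for $y \in B_t(x)$. Expanding $y = x + (y-x)$ gives
\begin{equation*}
  |y|^2 = |x|^2 + 2\la x, y - x\ra + |y-x|^2 \geq |x|^2 - 2|x|\,|y-x|,
\end{equation*}
by Cauchy--Schwarz and dropping the nonnegative term $|y-x|^2$. Since $|y-x| < t$ on $B_t(x)$, this yields $-|y|^2 \leq -|x|^2 + 2t|x|$, hence $\e^{-|y|^2} \leq \e^{-|x|^2} \e^{2t|x|}$ on the entire ball.

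Finally I would factor this constant out of the integral, leaving only the Lebesgue volume of a ball of radius $t$ in $\R^d$. Using $\lambda(B_t(x)) = S_d t^d/d$ gives
\begin{equation*}
  \gamma(B_t(x)) \leq \pi^{-\frac{d}2} \e^{-|x|^2} \e^{2t|x|} \cdot \frac{S_d t^d}{d},
\end{equation*}
which is precisely \eqref{eq:Gaussian-ball-shift-lemma}. There is no real obstacle here; the proof is essentially a one-line application of the reverse triangle/Cauchy--Schwarz bound followed by replacing the Gaussian density by its supremum on the ball. The only minor thing to keep track of is that the same bound on $|y|^2$ is valid regardless of whether $0 \in B_t(x)$, since the Cauchy--Schwarz estimate does not require $|x| \geq t$.
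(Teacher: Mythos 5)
Your proof is correct and follows essentially the same route as the paper: both bound $\e^{-|y|^2} \leq \e^{-|x|^2}\e^{2t|x|-|y-x|^2}$ via Cauchy--Schwarz and then estimate by the Lebesgue volume $S_d t^d/d$. The only cosmetic difference is that the paper records the intermediate inequality $\gamma(B_t(x)) \leq \e^{2t|x|}\e^{-|x|^2}\gamma(B_t(0))$ before discarding the factor $\e^{-|y-x|^2}$, whereas you drop it immediately.
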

\begin{proof}
  Remark that, with $B := B_t(x)$,
  \begin{align*}
    \int_B \e^{-|\xi|^2} \D\xi &= \e^{-|x|^2} \int_{B} \e^{-|\xi -
      x|^2} \e^{-2 \la x, \xi - x \ra} \D\xi\\
    &\leq \e^{-|x|^2} \int_{B} \e^{-|\xi - x|^2} \e^{2 |x| |\xi - x|}
    \D\xi\\
    &\leq \e^{-|x|^2} \e^{2 t|x|} \int_{B} \e^{-|\xi - x|^2} \D\xi\\
    &= \pi^{\frac{d}2} \e^{2 t|x|} \e^{-|x|^2} \gamma(B_t(0)).
  \end{align*}
  So, there holds that
  \begin{equation}\label{eq:Gaussian-ball-shift-lemma-proof-1}
    \gamma(B_t(x)) \leq \e^{2 t|x|} \e^{-|x|^2} \gamma(B_t(0)).
  \end{equation}
  We proceed by noting that
  \begin{equation*}
    \gamma(B_t(0)) \leq \pi^{-\frac{d}2} |B_t(0)| \leq \pi^{-\frac{d}2} t^d
\frac{S_d}d,
  \end{equation*}
  and combine this with the previous calculation to obtain
  \begin{equation*}
    \gamma(B_t(x)) \leq \frac{S_d}{\pi^{\frac{d}2}} \frac{t^d}d \e^{2 t|x|}
\e^{-|x|^2}.
  \end{equation*}
  This completes the proof.
\end{proof}

\subsection{Off-diagonal kernel estimates on annuli}
As is common in harmonic analysis, we often wish to decompose
$\R^d$ into sets on which certain phenomena are easier to handle. Here
we will decompose the space into disjoint annuli. 

Throughout this subsection we fix $x \in \R^d$, constants $A, a \geq 1$, and a
pair
$(y,t) \in \Gamma_x^{(A, a)}$. We use the notation $rB$ to mean the ball
obtained from the ball $B$ by multiplying its radius by $r$.

The annuli $C_k := C_k(B_t(y))$ are given by:
\begin{equation}
  \label{eq:C_k-annulus-decomposition}
  C_k :=
  \begin{cases}
   2B_t(y), &k = 0,\\
   2^{k + 1} B_t(y) \setminus 2^k B_t(y), &k \geq 1.
  \end{cases}
\end{equation}
So, whenever $\xi$ is in $C_k$, we get for $k
\geq 1$ that
\begin{equation}
  \label{eq:C_k-annulus-decomposition-expand}
  2^k t \leq |y - \xi| < 2^{k + 1} t.
\end{equation}
On $C_k$ we have the following bound for $M_{t^2}(y,\cdot)$:
\begin{lemma}\label{lem:On-diagonal-kernel-estimates-on-Ck}
  For all $\xi \in C_k$ for $k \geq 1$ we have:
  \begin{equation}
    \label{eq:On-diagonal-kernel-estimates-on-Ck}
    M_{t^2}(y, \xi) \leq \frac{\e^{|y|^2}}{(1 - \e^{-2t^2})^{\frac{d}2}}
    \exp\bigl(2^{k +  1} t |y| \bigr) \exp\Big(-\frac{4^k}{2 \e^{2 t^2}} \Bigr),
  \end{equation}
\end{lemma}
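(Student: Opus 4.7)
The plan is to apply the symmetric form of the Mehler kernel,
\[
  M_{t^2}(y,\xi) = \frac{\exp\bigl(-|\e^{-t^2}\xi - y|^2/(1 - \e^{-2t^2})\bigr)}{(1 - \e^{-2t^2})^{d/2}}\,\e^{|y|^2},
\]
which follows from \eqref{eq:Mehler-kernel-Sjogren} together with the symmetry $M_{t^2}(y,\xi)=M_{t^2}(\xi,y)$. The factors $\e^{|y|^2}$ and $(1-\e^{-2t^2})^{-d/2}$ then already appear as in the right-hand side of \eqref{eq:On-diagonal-kernel-estimates-on-Ck}, so the whole game is to produce a lower bound of the form
\[
  \frac{|\e^{-t^2}\xi - y|^2}{1 - \e^{-2t^2}} \;\geq\; \frac{4^k}{2\e^{2t^2}} \;-\; 2^{k+1} t\,|y|
\]
for $\xi\in C_k$, $k\geq 1$.

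To get this lower bound I would split the inner vector as $\e^{-t^2}\xi - y = \e^{-t^2}(\xi - y) - (1 - \e^{-t^2}) y$. Expanding the square, throwing away the nonnegative term $(1-\e^{-t^2})^2|y|^2$, and using Cauchy--Schwarz on the cross term yields
\[
  |\e^{-t^2}\xi - y|^2 \;\geq\; \e^{-2t^2}|\xi-y|^2 \;-\; 2\e^{-t^2}(1-\e^{-t^2})\,|\xi-y|\,|y|.
\]
Dividing by $1 - \e^{-2t^2} = (1-\e^{-t^2})(1+\e^{-t^2})$ reduces the coefficient in the cross term to $2\e^{-t^2}/(1+\e^{-t^2}) = 2/(1+\e^{t^2}) \leq 1$, so the cross contribution is bounded by $|\xi - y|\,|y| < 2^{k+1}t\,|y|$, using the outer annular bound \eqref{eq:C_k-annulus-decomposition-expand}.

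For the main term I would use the inner annular bound $|\xi-y|\geq 2^{k}t$ together with the elementary inequality $1-\e^{-2t^2}\leq 2t^2$, which gives
\[
  \frac{\e^{-2t^2}|\xi-y|^2}{1-\e^{-2t^2}} \;\geq\; \frac{\e^{-2t^2} \cdot 4^k t^2}{2t^2} \;=\; \frac{4^k}{2\e^{2t^2}}.
\]
Combining the two estimates gives the required lower bound, and exponentiating and multiplying by $\e^{|y|^2}/(1-\e^{-2t^2})^{d/2}$ finishes the proof. The only slightly delicate point is getting the constant in front of $2^{k}t|y|$ to come out as $2^{k+1}$ rather than $2^{k+2}$; this is exactly why one must handle the factor $2\e^{-t^2}/(1+\e^{-t^2})$ after dividing by the full denominator $1-\e^{-2t^2}$, instead of bounding the cross term before dividing.
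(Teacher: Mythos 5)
Your proof is correct and is in substance the same as the paper's: both reduce to the two elementary facts $1-\e^{-s}\leq s$ (giving the $\exp(-4^k/(2\e^{2t^2}))$ decay from the inner annular bound $|\xi-y|\geq 2^k t$) and $2\e^{-t^2}/(1+\e^{-t^2})\leq 1$ (controlling the cross term by $2^{k+1}t|y|$ via the outer bound). The only cosmetic difference is that the paper invokes the symmetric form \eqref{eq:Mehler-kernel} of the Mehler kernel directly, whereas you start from \eqref{eq:Mehler-kernel-Sjogren} and recover the same splitting by expanding $\e^{-t^2}\xi-y=\e^{-t^2}(\xi-y)-(1-\e^{-t^2})y$.
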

\begin{proof}
  Considering the first exponential which occurs in the Mehler kernel
  \eqref{eq:Mehler-kernel} together with
  \eqref{eq:C_k-annulus-decomposition-expand} gives for $k \geq 1$:
  \begin{align*}
    \exp\biggl(-\e^{-2t^2} \frac{|y - \xi|^2}{1 - \e^{-2t^2}} \biggr)
    &\overset{\phantom{(\dagger)}}{\leq} \exp\biggl(-\frac{4^k}{\e^{2t^2}}
    \frac{t^2}{1 - \e^{-2t^2}} \biggr)\\
    &\overset{(\dagger)}{\leq} \exp\biggl(-\frac{4^k}{2 \e^{2t^2}} \biggr),
  \end{align*}
  where $(\dagger)$ follows from $1 - \e^{-s} \leq s$ for $s \geq 0$. Using the
estimate $1 + s \geq 2s$ for $0 \leq s \leq 1$, we find for the second
exponential in the Mehler kernel \eqref{eq:Mehler-kernel}, by
\eqref{eq:C_k-annulus-decomposition-expand} that
  \begin{align*}
    \exp\biggl(2\e^{-t^2} \frac{\la y, \xi \ra}{1 + \e^{- t^2}} \biggr)
    & \leq \exp(|\la y, \xi \ra|)\\
    & \leq \exp(|\langle y, \xi-y\rangle|) \e^{|y|^2}\\
    & \leq \exp\bigl(2^{k + 1} t |y| \bigr) \e^{|y|^2}.
  \end{align*}
  Combining these estimates we obtain
\eqref{eq:On-diagonal-kernel-estimates-on-Ck}, as required.
\end{proof}

\section{The main result}
In this section we will prove our main theorem as mentioned in \eqref{eq:main}
for which the necessary preparations have already been made.
\begin{theorem}\label{thm:Gaussian-maximal-function}
  Let $A, a > 0$. For all $x \in \R^d$ and all $u \in \CcR$ we have
  \begin{equation}
    \label{eq:Maximal-function-cone}
    \sup_{(y, t) \in \Gamma_x^{(A, a)}} |\e^{t^2 L} u(y)| \lesssim
    \sup_{r > 0} \frac1{\gamma(B_r(x))}\int_{B_r(x)} |u| \, \D\gamma,
  \end{equation}
  where the implicit constant only depends on $A, a$ and $d$.
\end{theorem}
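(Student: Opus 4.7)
The strategy is to write the action of the semigroup as an integral against the Mehler kernel, decompose the integration domain into the annuli $C_k$ from \eqref{eq:C_k-annulus-decomposition}, and estimate each piece separately. Without loss of generality I may assume $A, a \geq 1$, since enlarging the cone only increases the supremum on the left-hand side of \eqref{eq:Maximal-function-cone}. Starting from
\begin{equation*}
  |\e^{t^2 L} u(y)| \leq \sum_{k=0}^{\infty} \int_{C_k} M_{t^2}(y, \xi) |u(\xi)| \D\gamma(\xi),
\end{equation*}
I would apply Lemma~\ref{lem:On-diagonal-kernel-estimates-on-Ck} on each annulus $C_k$ with $k \geq 1$, while for $k = 0$ I use a direct estimate from \eqref{eq:Mehler-kernel}: on $B_{2t}(y)$ we have $|\xi| \leq |y| + 2t$, and since $2\e^{-t^2}/(1+\e^{-t^2}) \leq 1$, the second Mehler factor is at most $\e^{|y|^2 + 2t|y|}$.

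Next I replace the local integral of $|u|$ over $C_k$ by an integral over a ball centered at $x$. Since $|x - y| < At$, the inclusion $B_{2^{k+1} t}(y) \subset B_{(A + 2^{k+1}) t}(x)$ yields
\begin{equation*}
  \int_{C_k} |u| \D\gamma \leq \gamma\bigl(B_{(A + 2^{k+1}) t}(x)\bigr) \cdot M_\gamma u(x),
\end{equation*}
where $M_\gamma u$ denotes the centered Gaussian Hardy--Littlewood maximal function on the right-hand side of \eqref{eq:Maximal-function-cone}. Applying Lemma~\ref{lem:Gaussian-ball-shift-lemma} to this ball produces a factor $((A + 2^{k+1}) t)^d \e^{2(A + 2^{k+1}) t |x|} \e^{-|x|^2}$.

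Now I collect all exponential factors and absorb them using the cone geometry. The factor $\e^{|y|^2}$ from the kernel combines with $\e^{-|x|^2}$ from the ball estimate, and Lemma~\ref{lem:Cone-Gaussians-comparable} (with $\alpha = Aa$, applicable since $|x-y| < At \leq Aa\, m(x)$) bounds the ratio by a constant depending only on $A$ and $a$. The cone constraint $t \leq a m(x)$ gives $t|x| \leq a$, hence $t|y| \leq t|x| + At^2 \leq a + A a^2$. Therefore the two "growing" exponentials $\e^{2^{k+1} t |y|}$ and $\e^{2(A + 2^{k+1}) t |x|}$ grow only like $\e^{c_1 2^k}$ for some $c_1 = c_1(A,a)$. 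Finally, the singular prefactor $(1 - \e^{-2t^2})^{-d/2}$ is tamed by the volume factor $t^d$, since $t^d (1 - \e^{-2t^2})^{-d/2}$ stays bounded on $(0, a]$.

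The remaining $k$-series has summand proportional to $(A + 2^{k+1})^d \, \e^{c_1 2^k} \, \exp(-4^k / (2\e^{2a^2}))$, where the super-exponential decay in $k$ inherited from Lemma~\ref{lem:On-diagonal-kernel-estimates-on-Ck} easily dominates all other factors. The series converges to a constant depending only on $A$, $a$, and $d$, and this completes the argument. The principal difficulty is the bookkeeping in this collection step: every exponential in $|x|$, $|y|$, $t$ and $k$ must be matched against another factor or swallowed by the Mehler decay, and the cone conditions $|x - y| < At$ and $t \leq a m(x)$ must be invoked at the right moments to keep $t|x|$, $t|y|$ and $e^{|y|^2 - |x|^2}$ all uniformly bounded.
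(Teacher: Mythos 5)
Your proposal is correct and follows essentially the same route as the paper: the same annular decomposition $\{C_k\}$, the kernel bound of Lemma~\ref{lem:On-diagonal-kernel-estimates-on-Ck}, the transfer $\e^{|y|^2}\to\e^{|x|^2}$ via Lemma~\ref{lem:Cone-Gaussians-comparable}, the ball estimate of Lemma~\ref{lem:Gaussian-ball-shift-lemma} applied to balls centered at $x$ containing $C_k$, the taming of $(1-\e^{-2t^2})^{-d/2}$ by $t^d$, and the convergent series dominated by the $\exp(-4^k/(2\e^{2a^2}))$ decay. The only differences are cosmetic (you use the single radius $(A+2^{k+1})t$ where the paper splits into the cases $k\geq K$ and $k<K$, and you make explicit the harmless reduction to $A,a\geq 1$).
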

\begin{proof}
  We fix $x \in \R^d$ and $ (y, t) \in \Gamma_x^{(A, a)}$. The proof of
  \eqref{eq:Maximal-function-cone} is based on splitting the
  integration domain into the annuli $C_k$ as defined by
  \eqref{eq:C_k-annulus-decomposition} and estimating on each annulus. More
  explicit,
  \begin{equation}
    \label{eq:Maximal-function-cone-intermediate-step-1}
    |\e^{t^2 L} u(y)| \leq \sum_{k = 0}^\infty I_k(y),
    \:\text{where}\: I_k(y) := \int_{C_k} M_{t^2}(y, \cdot) |u(\cdot)|
    \,\D\gamma.
  \end{equation} 
  We have $t \leq a m(x) \leq a$ and, by Lemma~\ref{lem:m-xy-equivalence}, $t
  |y| \leq a(1 + aA)$. Together with
  Lemma~\ref{lem:On-diagonal-kernel-estimates-on-Ck} we infer, for $\xi \in
  C_k$ and $k \geq 1$, that
  \begin{align*}
    \label{eq:Mehler-kernel-estimate-one-sided-bound-1}
    M_{t^2}(y, \xi) &\leq \frac{\e^{|y|^2}}{(1 - \e^{-2t^2})^{\frac{d}2}}
    \exp(2^{k + 1} a(1 + aA)) \exp\Big(-\frac{4^k}{2 \e^{2 a^2}} \Bigr)\\
    &=: \frac{\e^{|y|^2}}{(1 - \e^{-2t^2})^{\frac{d}2}} c_k.
  \end{align*}
  Combining this with Lemma~\ref{lem:Cone-Gaussians-comparable}, we obtain
  \begin{equation}
    \label{eq:Mehler-kernel-estimate-one-sided-bound-1}
    M_{t^2}(y, \xi) \lesssim_{A, a} \frac{\e^{|x|^2}}{(1 -
\e^{-2t^2})^{\frac{d}2}} c_k.
  \end{equation}       
  Also, by \eqref{eq:C_k-annulus-decomposition-expand} we get
  \begin{equation*}
    |x - \xi| \leq |x - y| + |\xi - y| \leq (2^{k + 1} + A) t .
  \end{equation*}
  Let $K$ be the smallest integer such that $2^{k + 1} \geq A$ whenever $k \geq
  K$. Then it follows that $C_k$ for $k \geq K$ is contained in $B_{2^{k +
      2}t}(x)$ and for $k < K$ is contained in $B_{2At}(x)$. We set
  \begin{equation*}
    D_k := D_k(x) =
    \begin{cases}
      B_{2^{k + 2}t}(x) &\text{if $k \geq K$,}\\
      B_{2At}(x) &\text{elsewhere.}
    \end{cases}
  \end{equation*}
  Let us denote the supremum on right-hand side of
  \eqref{eq:Maximal-function-cone} by $M_\gamma u (x)$. Using
  \eqref{eq:Mehler-kernel-estimate-one-sided-bound-1}, we can bound the
  integral on the right-hand side of
  \eqref{eq:Maximal-function-cone-intermediate-step-1} by 
  \begin{align*}
    \int_{C_k}  M_{t^2}(y, \cdot) |u(\cdot)| \,\D\gamma & \lesssim_{A, a}
    c_k \frac{\e^{|x|^2}}{(1 - \e^{-2t^2})^{\frac{d}2}}   \int_{C_k}
    |u| \,\D\gamma\\ 
    &\leq c_k \frac {\e^{|x|^2}} {(1 -
      \e^{-2t^2})^{\frac{d}2}} \int_{D_k} |u| \,\D\gamma\\ 
    &\leq c_k \frac{\e^{|x|^2}}{(1 - \e^{-2t^2})^{\frac{d}2}} \gamma(D_k)
M_\gamma u(x),
  \end{align*}
  where we pause for a moment to compute a suitable bound for $\gamma(D_k)$. As
  above we have both $t|x| \leq a m(x)|x| \leq a$ and $t \leq a$. Together with
  Lemma~\ref{lem:Gaussian-ball-shift-lemma} applied to $D_k$ for $k \geq K$ we
  obtain:
  \begin{align*}
    \gamma(D_k) \e^{|x|^2} &\lesssim_A C^d \frac{S_d}{d} t^d 2^{kd} \e^{2^{k +
3} t |x|}
    \e^{-|x|^2} \e^{|x|^2}\\
    &\lesssim_{A, a, d} t^d 2^{k d} \e^{2^{k + 3} a}.
  \end{align*}
  Similarly, for $k < K$:
  \begin{equation*}
    \gamma(D_k) \e^{|x|^2} \lesssim_{A, a, d} t^d \e^{2 A a}.
  \end{equation*}
  Using the bound $t\leq a$, we can infer that
 \begin{equation*}
    \frac{t^d}{(1 - \e^{-2t^2})^{\frac{d}2}} \leq \frac{a^d}{(1 -
      \e^{-2a^2})^{\frac{d}2}} \lesssim_{a, d} 1.
  \end{equation*}
  (note that $s/(1-\e^{-s})$ is increasing). Combining these computations with
 the ones above for $k \geq K$ we get
  \begin{equation*}
    \int_{C_k}  M_{t^2}(y, \cdot) |u(\cdot)| \,\D\gamma \lesssim_{A,
      a, d} c_k 2^{k d} \e^{2^{k + 2} a} M_\gamma u(x),
  \end{equation*}
  while for $k < K$ we get
  \begin{equation*}
    \int_{C_k}  M_{t^2}(y, \cdot) |u(\cdot)| \,\D\gamma \lesssim_{A,
      a, d} c_k M_\gamma u(x).
  \end{equation*} 
 Similarly, for $\xi \in 2B_t(x)$ we obtain:
   \begin{equation*}
    I_0 := \int_{2B_t}  M_{t^2}(y, \cdot) |u(\cdot)| \,\D\gamma \lesssim_{A,
      a, d} M_\gamma u(x).
  \end{equation*}
  Inserting the dependency of $c_k$ upon $k$ as coming from
  \eqref{eq:Mehler-kernel-estimate-one-sided-bound-1}, we obtain the bound:
  \begin{align*}
    |\e^{t^2 L} u(y)| &= I_0 + \sum_{k = 1}^{K - 1} I_k + \sum_{k = K}^\infty
I_k\\
    &\lesssim_{A, a, d} \biggl[1 + \sum_{k =  1}^{K - 1} c_k +  \sum_{k = 
K}^\infty c_k
      2^{k d} \e^{2^{k + 2}a} \biggr] M_\gamma u(x),\\
    &\lesssim_{A, a, d} \biggl[1 + \sum_{k =  1}^{K - 1} \e^{-\frac{4^k}{2 \e^{2
a^2}}} + \sum_{k = K}^\infty 2^{k d} \e^{2^{k + 1} (1 + 2a + aA)}
\e^{-\frac{4^k}{2 \e^{2 a^2}}} \biggr] M_\gamma u(x),
  \end{align*}
  valid for all $(y, t) \in \Gamma_x^{(A, a)}$. As the sum on the right-hand
side
  evidently converges, we see that taking the supremum proves
  \eqref{eq:Maximal-function-cone}.
\end{proof}

\section{Acknowledgments}
This work initiated as part of a larger project in collaboration with Mikko
Kemppainen.
I would like to thank the referee for his/her useful suggestions.

\bibliographystyle{elsarticle-num}
\bibliography{Teuwen-GaussianMF}

\end{document}